\newsavebox{\auteurbm}
  {\small\slshape%
  \savebox{\auteurbm}{\upshape\sffamily#1}%
  \begin{flushleft}}
  {\\[4pt]\usebox{\auteurbm}
  \end{flushleft}\normalsize\upshape}
  \theoremstyle{definition}
  \newtheorem{defn}{Definition}[section]
  \newtheorem*{defn*}{Definition}
  \theoremstyle{plain}
  \newtheorem{thm}{Theorem}[section]
  \newtheorem{prop}{Proposition}[section]
  \newtheorem*{prop*}{Proposition}
  \newtheorem{lem}{Lemma}[section]
  \newtheorem{cor}{Corollary}[section]
   \newtheorem*{cor*}{Corollary}
  \newtheorem*{theo*}{Theorem}
  \newtheorem*{thm*}{Theorem}
  \theoremstyle{remark}
  \newtheorem{rem}{Remark}[section]
  \newtheorem{ex}{Example}
  \newtheorem{nota}{Notation}
\newcommand{\p}{\mathbb{P}}
\newcommand{\R}{\mathbb{R}}
\newcommand{\A}{\mathcal{A}}
\newcommand{\B}{\mathcal{B}}
\newcommand{\C}{\mathcal{C}}
\newcommand{\D}{\mathcal{D}}
\newcommand{\U}{\operatorname{\mathcal{U}}}
\newcommand{\V}{\mathcal{V}}
\newcommand{\id}{\operatorname{id}}
\newcommand{\ov}{\overline}
\newcommand{\bigsqcap}{\mathlarger{\mathlarger{\mathlarger{\sqcap}}}}
\newcommand\independent{\protect\mathpalette{\protect\independenT}{\perp}}
\def\independenT#1#2{\mathrel{\rlap{$#1#2$}\mkern2mu{#1#2}}}
 \newcommand\N{\mathbb{N}}
\newcommand{\I}{\mathcal{P}^2(I)}
\newcommand{\Pa}{\mathcal{P}}
\newcommand{\PP}{\mathcal{P}^2}
\newcommand{\g}{\mathcal{F}}
\DeclareMathOperator*{\vect}{\textbf{Gr}}
\newcommand{\bA}{\mathcal{M}}
\newcommand{\pp}{\g(I)}
\newcommand{\GG}{U}
\newcommand{\GP}{\GG(I)}
\newtheorem*{rep@theorem}{\rep@title}
\newcommand{\newreptheorem}[2]{%
\newenvironment{rep#1}[1]{%
 \def\rep@title{#2 \ref{##1}}%
 \begin{rep@theorem}}%
 {\end{rep@theorem}}}
  \theoremstyle{plain}
\newcommand{\mylabel}[2]{#2\def\@currentlabel{#2}\label{#1}}
\numberwithin{equation}{section}
\begin{document}

\begin{frontmatter}
\title{Bayesian/Graphoid intersection property for factorisation spaces.}
\runtitle{Intersection property for factorisation spaces}

\begin{aug}
\author[A]{\fnms{Gr\'egoire} \snm{Sergeant-Perthuis}\ead[label=e1]{sergeant@phare.normalesup.org}}
\address[A]{IMJ-PRG, Universit\'e de Paris \printead{e1}}

\end{aug}

\begin{abstract}

We remark that Pearl's Graphoid intersection property, also called intersection property in Bayesian networks, is a particular case of a general intersection property, in the sense of intersection of coverings, for factorisation spaces, also coined as factorisation models, factor graphs or by Lauritzen in his reference book \emph{Graphical Models} as hierarchical model subspaces. A particular case of this intersection property appears in Lauritzen's book as a consequence of the decomposition into interaction subspaces; the novel proof that we give of this result allows us to extend it in the most general setting. It also allows us to give a direct and new proof of the Hammersley-Clifford theorem transposing and reducing it to a corresponding statement for graphs, justifying formally the geometric intuition of independency, and extending it to non finite graphs. This intersection property is the starting point for a generalization of the decomposition into interaction subspaces to collections of vector spaces \citep{GS2}.

\end{abstract}

\begin{keyword}[class=MSC2020]
\kwd[Primary ]{62H22}
\kwd{00X00}
\kwd[; secondary ]{06F25}
\end{keyword}

\begin{keyword}
\kwd{Hammersley-Clifford}
\kwd{Graphical models}
\end{keyword}

\end{frontmatter}

\section{Introduction \label{prelim}}

\subsection{Intersection property}

\subsubsection{Intersection property and graphoids}
To describe the structure of dependencies of a set of random variables, as well said by Judea Pearl in Chapter 3 of \citep{Pearl1988}, one can introduce a ternary operator corresponding to the conditional independence:\\

 "The notion of informational relevance is given [...] through the device of conditional independence, which successfully captures our intuition about how dependencies
 should change in response to news facts".\\

For any three random variables with discrete values, we will note $X\independent Y\vert_{P} Z$ the fact that $X$ is independent of $Y$ conditionally to $Z$ (see Section \ref{chapitre-1-section-4} Equation \ref{condindep}); in the previous expression $P$ will be omitted from now on, as in literature.\\

The intersection property in Bayesian networks, as found in \citep{FirstC} (Chapter 2 Proposition 2.12) or \citep{Pearl1988} (Chapter 3 Theorem 1), is the following proposition.

\begin{repprop}{Graphoid_interesect}[Intersection property]
Let $W,X,Y,Z$ be four random variables that take values in a finite set and for which the probability density $P_{W,X,Y,Z}$ is stricly positive, then,

\begin{equation} \label{interbay}
 X\independent Y\vert (Z,W) \text{ and } X\independent W \vert (Z,Y)  \implies X\independent(Y,W)\vert Z
\end{equation}

\end{repprop}

Semi-graphoids and graphoids were introduced to give a formal set of axioms, on $\independent$, for conditional independence (see \citep{Dawid}, \citep{Pearl1988}); in this context Proposition \ref{Graphoid_interesect} is called the intersection axiom.

\begin{defn}[Semi-graphoid, graphoid \citep{Pearl1988}\citep{Lauritzen}]
A semi-graphoid structure on a collection $I=\coprod_J \{X_j\}$ is a ternary relation on subsets of $I$, that we shall note as $X\independent Y|Z$, such that, for any, $X,Y,Z,W$, disjoint subsets of $I$, 

\begin{enumerate}
\item if $X\independent Y |Z$ then $Y\independent X|Z$;
\item if $X\independent Y|Z$ and $U\subseteq X$, then $U\independent Y|Z$;
\item if $X\independent Y|Z$ and $U\subseteq X$, then $X\independent Y|Z \cup U$;
\item if$X\independent Y|Z$ and $X\independent W|Y\cup Z$ then $X\independent W\cup Y |Z$.

\end{enumerate}

It is a graphoid if furthermore it satisfies the intersection axiom. 

\end{defn}

\subsubsection{Factorisation spaces}

Let us suppose that $X,Y,Z$ take values respectively in finite spaces $\Omega_X$, $\Omega_Y$, $\Omega_Z$. The fact that $X$ is independent of $Y$ conditionally to $Z$ can be restated as a factorisation property on $P_{X,Y,Z}$; for simplicity let us assume that $P_{X,Y,Z}$ is sticly positive, then $X\independent Y\vert Z$ if and only if for any $(x,y,z)\in \Omega_X\times \Omega_Y\times \Omega_Z$,

\begin{equation}
P_{X,Y,Z}(x,y,z)= \frac{P_{X,Z}(x,z)P_{Y,Z}(y,z)}{P_Z(z)}
\end{equation}

where $P_{X,Z}$, $P_{Y,Z}$, $P_Z$ are repectively the marginal probabilities of $(X,Z)$, $(Y,Z)$ and $Z$.\\

If one notes $\g_{Y,Z}$ the set of strictly positive functions on $\Omega_X\times\Omega_Y\times \Omega_Z$ that only depend on $(Y,Z)$ and $\g_{X,Z}\g_{Y,Z}$ the set of functions that are the product of a function of $\g_{X,Z}$ and a function of $\g_{Y,Z}$, then the intersection property can be restated as, for any strictly positive proability law $P_{X,Y,Z,W}$,

\begin{equation}\label{p1:intersection-factorisation}
P_{X,Y,Z,W}\in \g_{W,X,Z}\g_{W,Y,Z}\cap \g_{X,Y,Z}\g_{W,Y,Z}\implies P_{X,Y,Z,W}\in \g_{X,Z}\g_{W,Y,Z}
\end{equation}

We are interested in generalizing this result to intersections of factorisation spaces that we will define now.

\begin{defn}[Factorisation space]
Let $I$ be a finite set, let $\A\subseteq \Pa(I)$, where $\Pa(I)$ is the set of subsets of $I$. Let $(E_i,i\in I)$ be a collection of sets, let $E_a=\prod_{i\in a} E_i$ for any $a\in \Pa(I)$; for $x\in E_I$, we will denote $x_a$ its projection onto $E_a$. The factorisation space over $\A$ is defined as follows,

\begin{equation}
\g_\A= \{ f\in \R_{>0}^{E_I}: \quad \exists (f_a\in \R^{E_a}_{>0},a\in \A),\forall x\in E_I \ f=\prod_{a\in \A} f_a(x_a)\}
\end{equation}
\end{defn}

\begin{nota}
From now on we shall note $\Pa\Pa(I)$ as $\PP(I)$.
\end{nota}

One can extend the previous definition to the case where $\A$ is non finite. To do so let us introduce a notation; for any $\A\subseteq \Pa(I)$, let,

\begin{equation}
\hat{\A}=\{a\in \Pa(I): \ \exists b\in \A, \ a\leq b\}
\end{equation}

\begin{nota}
$\hat{\A}$ is called the lower set of $\A$ and the set of lower sets will be denoted as $\U(\Pa(I))$, i.e. $\U(\Pa(I))=\{\hat{\A} \vert \A \subseteq \Pa(I)\}$.
\end{nota}

\begin{defn}[Generalized factorisation spaces]
Let $I$ be any set and let $\A\subseteq\Pa(I)$, any $f\in  \R_{>0}^{E_I}$ is in $\g_\A$ if and only if there is $n\in \N$, a collection $(a_k\in \hat{\A} , k\in[1, n])$ and a collection $(f_k\in \R^{E_{a_k}})$ such that for any $k\in [1,n]$, $\vert a_k\vert<\infty $ and for any $x\in E_I$,

\begin{equation}
f(x)=\prod_{k\in [1,n]} f_k(x_{a_k})
\end{equation}

\end{defn}

In particular,

\begin{equation}
\g_{\A}= \g_{\hat{\A}}
\end{equation}

\subsubsection{Main theorem}

The result we want to emphasize in this document is that an intersection property still holds for factorisation spaces.

\begin{repthm}{infi-intersect}[Intersection property for factorisation spaces]

Let $I$ be any set, let $(E_i,i\in I)$ be any collection of sets. For any family  $(\A_j)_{j\in J}$ of elements of $\PP(I)$,

\begin{equation}\label{chapitre-1:intro:intersection-property-factorisation}
\bigcap \limits_{j\in J} \g_{\hat{\A}_j}=\g_{\bigcap \limits_{j\in J}\hat{\A}_j}.
\end{equation}

\end{repthm}

A particular case of the intersection property appears in Lauritzen's \emph{Graphical Models} \citep{Lauritzen} in Appendix B Proposition B.5 as a consequence of the decomposition into interaction subspaces, that we will introduce in the next subsection. The proof we give of this result holds in a more general setting and is a direct one that does not rely on the decomposition into interaction subspaces. In fact in \citep{GS2} we show the converse statement that Equation \ref{chapitre-1:intro:intersection-property-factorisation} is a structure property that characterizes collections of vector spaces that can be decomposed into direct sums of subspaces, similarly to the decomposition into interaction subspaces, in other words satisfying the intersection property implies that this collection has such decomposition.\\ 

A direct consequence of Theorem \ref{infi-intersect} is that there is a complete lattice morphism between $(\U(\Pa(I)),\subseteq)$ and factorisation spaces. This remark enable us to prove the Hammersley-Clifford Theorem in a direct and novel manner, pushing properties of the graph of dependencies directly on its graphical model, that we will now sketch and allows us to give a generalization of the Hammersley-Clifford theorem.\\

\subsection{Hammersley-Clifford Theorem}

\subsubsection{Graphical models: Markov fields and Gibbs fields}

A graphical model is a way to express the interactions of random variables through the properties of a graph. For example, let $I$ be a finite set and let $(X_i,i\in I)$ be a collection of random variables. Let us associate to each random variable a vertex of an undirected  graph $G=(I,A)$, where $I$ is its set of vertices and $A$ its set of edges; one could say that two random variables are in interaction if their vertices are nearest-neighbours in $G$ and expect that there is a collection $(f_a\in \R^{E_a} ,a\in  A)$ such that,

\begin{equation}
\ln P= \sum_{a\in A} f_a
\end{equation}

This is an example of a Gibbs state with respect to a potential. The adjacent elements of $i\in G$ will be denoted as $\partial i$.

\begin{defn}[Gibbs States]
Let $I$ be a finite set and $(E_i,i\in I)$ be a collection of finite sets, let $\A\in \PP(I)$ and let $\Phi=(\phi_a\in \R^{E_a},a\in \A)$ be a collection of interactions, which we shall call a potentiel; a Gibbs state with respect to a potential $\Phi$ is defined as follows, for any $x \in E$, 

\begin{equation}
P(x)= \frac{e^{\sum_{a\in \A}\phi_a(x_a)}}{\sum_{y\in E_I} e^{\sum_{a\in \A}\phi_a(y_a)}}
\end{equation}

\end{defn}

\begin{rem}
Any probability law on $E$ is a Gibbs state; furthermore if there is a potential $\Phi=(\phi_a\in \R^{E_a},a\in \A)$ such that a probability law $P$ is a Gibbs state with respect to $\Phi$, then $P$ is in the factorisation space over $\A$.
\end{rem}

There is an other way to specify the interactions of the random variables from the properties of a graph. For example on can imagine that if two vertices $v,u$ are connected only through a third vertex $k$, i.e. any path from $v$ to $u$ pass by $k$, this would mean that the corresponding random variables, $X_v$, $X_u$ are dependent only through $X_k$, i.e. that,

\begin{equation}
X_v\independent X_u \vert X_k
\end{equation}

This is a particular case of spatial Markov property for the probability law of the random variables. There are several, a priori, different way to translate conditional connectedness properties of the graph into conditional independence properties, let us define two of such. Let for $a\subseteq I$, $X_a$ denote $(X_i)_{i\in a}=X_{|a}$.

\begin{defn}[Markov properties]
Let $G=(I,A)$ be a finite graph, a stricly positive probability $P_X$ on a finite set $E=\underset{i\in I}{\prod}E_i$ obeys,

\begin{enumerate}
\item $(P)$ the pairwise Markov property relative to $G$, if for any pair $(i,j)$ of non-adjacent vertices 

$$X_i \independent X_j | X_{I\setminus \{i,j\}}.$$

\item $(L)$ the local Markov property relative to $G$, if for any vectex $i\in V$, 

$$ X_i \independent X_{I \setminus (i\cup \partial i) }|X_{\partial i}$$

\end{enumerate}

And we call the respective sets $P(G)$, $L(G)$. 

\end{defn}

As we will see the Hammersley-Clifford theorem asserts that the two points of view for reading the interactions from a graph, the Gibbs state and Markov property point of views, are in fact equivalent for a strictly positve probability law. One of the ways to prove the Hammersley-Clifford theorem is to build a decomposition into interaction subspaces of the factorisation spaces \citep{Speed}, we shall therefore give a brief presentation of this decomposition even though we shall not be using it in the rest of this document.

\subsubsection{The decomposition into interaction subspaces}

Let $I$ be a finite set and let $(E_i,i\in I)$ be a collection of finite sets. Let us consider the canonical scalar product on  $\R^{E_I}$, i.e. for any $f,g\in \R^{E_I}$,  

\begin{equation}
\langle f,g\rangle =\sum_{x\in E_I}f(x)g(x)
\end{equation}

Let for any $\A\in \PP(I)$, 

\begin{equation}
U(\A)= \ln \g_\A
\end{equation}

\begin{thm}[Decomposition into interaction subspaces]\label{chapitre-1:intro:decomposition-into-iteraction-spaces}
There is a collection of vector subspaces of $\R^{E_I}$, $(S_a,a\subseteq \Pa(I))$, such that, for any $a\subseteq \Pa(I)$,

\begin{equation}
U(\{a\})= \bigoplus_{b\subseteq a} S_b
\end{equation}

and any two $S_a,S_b$, with $a\neq b$, are orthogonal to one another.
\end{thm}

Several proofs of this result can be found in \citep{Speed}.\\

\subsubsection{A new proof of the Hammersley-Clifford Theorem}

The Hammersley-Clifford theorem states that any Markov condition for a stricly positive probability law can be restated as a condition on the locality of the interactions of its potential, in other words Markov conditions correspond to some factorisation spaces.\\

Let $G=(I,A)$ be a graph; a clique of $G$ is a subset of $G$ such that every two distinct vertices are adjacent. We will note $\mathcal{C}$ the set of its cliques.\\

\begin{thm}[Hammersley-Clifford]\label{p1:HC}
 
 Let $G=(I,A)$ be a finite graph. For all $P_X$ strictly positive probability law on a finite set $\prod_{i\in I}E_i$,

\begin{equation}
P_X\in P(G) \iff P_X\in L(G)\iff P_X \in  \g_\mathcal{C}.
\end{equation}

\end{thm}

The intesection property for factorisation spaces enables us to bring back the proof of the Hammersley-Clifford theorem to a general property on graphs, let us sketch the proof that will will present in more details in this document.\\

Let $(i,j)$ be a pair of elements of $I$, let $[i,j]=\{I\setminus \{i\}), I\setminus \{j\})\}$ and let,

\begin{equation}
\mathcal{A}_{P}=\underset{(i,j): \text{ } i\notin  \partial j}{\bigcap}\widehat{[i,j]}
\end{equation}

Similarly for all $i\in I$, let $[i]=\{I \setminus i, i\cup \partial i  \}$ and let $\mathcal{A}_{L}= \underset{i}{\bigcap}\hat{[i]}$.\\

By remarking that,
\begin{equation}
\hat{\mathcal{A}}_{L}=\hat{\mathcal{A}}_{P}= \mathcal{C}
\end{equation}

and applying the intersection property for factorisation spaces one ends the proof.

\subsection{Structure of this document}

In Section \ref{chapitre-1:section-1}, we will give some general properties on partial coverings and their natural order making it a preorder with join and meet. Proposition \ref{morph} states that there is an increasing function between the preorder set of partial coverings and the poset of factorisation spaces that preserve the join. \\

In Section \ref{chapitre-1:section-intersection-1} we prove the intersection property (Theorem \ref{chapitre-1:central}) and as a consequence we show that the increasing function also preserves meets. In this section we do not assume the $(E_i, i\in I)$ to be finite, however we assume $I$ to be finite. \\

In the next section, Section \ref{chapitre-1:section-intersection-2}, we extend the intersection property to any sets $I$, Theorem \ref{infi-intersect}.\\

Finaly in Section \ref{chapitre-1:section-applications} we give apply the previous theorems giving new proofs for classical results around factorisation spaces that allow us to extend them, in particular we give a generalization of the Hammersley-Clifford theorem. 
\section{Order on partial coverings and factorisation spaces}\label{chapitre-1:section-1}

In this section $I$ is a finite set, let $E=\prod_{i\in I} E_i$ be a product of any sets. For $a,b\in \Pa(I)$ such that $b\subseteq a$ let $\pi^a_b:E_a\to E_b$ be the projection of $E_a$ onto $E_b$ where by convention $\pi^a_\emptyset:E_a\to \ast$ is the projection on the set with one element $\ast$; for $x\in E$, we shall note $\pi_a(x)$ as $x_a$. In particular $\g_\emptyset$ is the set of stricly positive constant functions and for any $a\in \Pa(I)$ we note $\g_{\{a\}}$ as $\g_a$.\\

$\R_{>0}$ can be seen as a vector space for the product law and the exponentiation and similarly for the product of these spaces. In this section we keep the, unusual, product convention to stay closer to the spirit of factorisation.

\subsection{Order on partial coverings} \label{section-coverings}

Any subset $\A\subseteq \Pa(I)$ can be seen as a partial covering of $I$ of support $\cup_{a\in \A} a$. The order for partial covering that we will now introduce is a direct extension of the usual one for coverings.

\begin{defn}\label{order}
 Let us define an intersection $\sqcap$ and a relation $ \text{R}$ on $ \I$. For all $\A, \B \in \I$,

\begin{equation}
\A \text{ R } \B \quad \iff \quad \forall a\in \A,\quad \exists b\in \B,\quad   a\subseteq b
\end{equation}

\begin{equation}
\A\sqcap \B= \{a\cap b \quad | a\in \A, b\in \B \}
\end{equation}
\end{defn}

\begin{prop}
$\text{R}$ is pre-order that we will note $\leq$ and for $\A,\B,\C, \D\in \I$,
\begin{equation}\label{property1}
\A\sqcap \B = \B\sqcap \A, \quad (\A\cup\B)\sqcap \C= (\A\sqcap \C)\cup (\B\sqcap \C),\quad \A\sqcap \B\leq \A \quad .
\end{equation}
\begin{equation}\label{property2}
[\A\leq \C \wedge \B \leq \D ] \implies \A\cup \B \leq \C\cup \D.
\end{equation}
\begin{equation}\label{property3}
[\A\leq \C \wedge \B \leq \D ] \implies \A\sqcap \B \leq \C\sqcap \D .
\end{equation}

where $\wedge$ is the logic operator "and".
\end{prop}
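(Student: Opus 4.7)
The plan is to verify each of the six claims directly from the definitions of $\leq$ and $\sqcap$; none of them requires more than a short unfolding, and the main effort is just organising them in a convenient order so that later parts reuse earlier ones.

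First I would establish that $\leq$ is a pre-order. Reflexivity $\A \leq \A$ is immediate by taking $b=a$ for each $a\in\A$. For transitivity, assuming $\A\leq\B$ and $\B\leq\C$, given $a\in\A$ I would pick $b\in\B$ with $a\subseteq b$ and then $c\in\C$ with $b\subseteq c$; composing gives $a\subseteq c$. Next I would unpack the three formulas of (\ref{property1}). Commutativity $\A\sqcap\B=\B\sqcap\A$ follows from $a\cap b = b\cap a$ and the unordered nature of sets. Distributivity is a direct set-theoretic computation:
\begin{equation*}
(\A\cup\B)\sqcap\C = \{x\cap c \mid x\in\A\cup\B, c\in\C\} = \{a\cap c \mid a\in\A, c\in\C\} \cup \{b\cap c \mid b\in\B, c\in\C\},
\end{equation*}
which is $(\A\sqcap\C)\cup(\B\sqcap\C)$. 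For $\A\sqcap\B\leq\A$, given any element $a\cap b\in\A\sqcap\B$ I would take $a\in\A$ itself as the witness, since $a\cap b\subseteq a$.

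For (\ref{property2}), assume $\A\leq\C$ and $\B\leq\D$ and pick any $x\in\A\cup\B$. If $x\in\A$ there exists $c\in\C\subseteq\C\cup\D$ with $x\subseteq c$, and the case $x\in\B$ is symmetric. For (\ref{property3}), I would take an arbitrary element $a\cap b\in\A\sqcap\B$, use $\A\leq\C$ to find $c\in\C$ with $a\subseteq c$ and $\B\leq\D$ to find $d\in\D$ with $b\subseteq d$; then $a\cap b\subseteq c\cap d$, and $c\cap d$ lies in $\C\sqcap\D$ by definition.

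None of the steps constitutes a real obstacle; the only point that deserves care is remembering that elements of $\I$ are families (subsets of $\mathcal{P}(I)$) rather than sequences, so the quantifier in the definition of $\leq$ is over the underlying set and commutativity/distributivity of $\sqcap$ boil down to equality of sets of intersections. I would present the six statements in the order above so that the monotonicity facts (\ref{property2}) and (\ref{property3}) come last and can, if desired, be invoked together with the distributivity identity of (\ref{property1}) in later sections.
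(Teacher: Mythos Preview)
Your proposal is correct and follows essentially the same approach as the paper: each of the six claims is verified directly from the definitions by unfolding, in the same order and with the same witnesses (e.g.\ $a\cap b\subseteq a$ for $\A\sqcap\B\leq\A$, and $a\cap b\subseteq c\cap d$ for (\ref{property3})). The only difference is presentational---the paper phrases some of the set equalities via unions of singletons, whereas you use set-builder notation---but the argument is the same.
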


\begin{proof}
 Let $\A,\B,\C \in \I$. For all $a\in \A$, $a\subseteq a$. Therefore $\A\leq  \A$. Assume, $\A \leq \B$ and $\B\leq C$, then,
 
$$\forall a\in \A,\quad \exists b\in \B ,  a\subseteq b \quad \quad \forall b\in \B,\quad \exists c\in \C ,  b\subseteq c \quad .$$

For $a\in \A$ there is $b\in \B $ and $c\in \C$ such that  $a\subseteq b\subseteq c$ so $a\subseteq c$ and $[\A \leq \B \wedge \B\leq \C ] \implies \A \leq \C$. Therefore $\leq$ is a pre-order.

$$\exists a\in \A, \exists b\in \B,\quad  x=a\cap b \iff \exists a\in \B, \exists b\in \A, x=a\cap b$$

So $\A\sqcap \B= \B\sqcap \A$.

$$(\A\cup\B)\sqcap \C=\underset{(a,c)\in \left(\A\cup\B\right) \times \C}{\bigcup} \{a\cap c\}= \underset{\substack{(a,c)\in \A\times \C \\ \ or \ (a,c)\in \B \times \C}}{\bigcup}\{a\cap c\}=
\underset{(a,c)\in \A\times \C }{\bigcup} \{a\cap c\} \quad \cup \underset{(b,c)\in \B\times \C }{\bigcup} \{b\cap c\}  \quad .$$

So $(\A\cup\B)\sqcap \C= (\A\sqcap \C)\cup (\B\sqcap \C)$.\\

Let $c\in \A\sqcap \B$ then there is $a\in A$, $b\in \B$ such that, $c\subseteq a\cap b\subseteq a$. So,
$[\A\sqcap \B\leq \A ] \wedge [\A\sqcap \B \leq \B]$.\\

Assume $\A\leq \C$ and $ \B \leq \D$ then for all $a\in \A$ there is $c\in \C$ such that $a\subseteq c$, for all $b\in \B$ there is $d\in \D$ such that $b\subseteq d$. So for $x\in \A\cup \B$ there is $c\in \C$ such that $x\subseteq c$ or $d\in \D$ such that $x\subseteq d$. However $c$ and $d\in \C\cup \D$ so $\A\cup \B \leq \C\cup \D$. The last is proven the same way noting that $a\subseteq c$, $b\subseteq d$ implies $a\cap b \subseteq c\cap d$.

\end{proof}

\begin{defn}\label{pre-order}
Let us introduce the usual equivalence relation for a pre-order (see E.III.3 \citep{Bourbaki}), for all $\A,\B\in \I$,

\begin{equation}
\A \sim \B \quad \iff \quad [\A\leq \B] \wedge [\B\leq \A].
\end{equation}

Let $q:\I\to J$, with $J$ any poset, be a pre-order morphism, in the sense that for any $a,b\in \I$ such that $a\leq b$, $q(a)\leq q(b)$. $q$ is said to preserve the equivalence relation when for all $\A,\B\in \I$, $\left[\A \sim \B \implies q(\A)=q(\B)\right]$. In what follows we supporse that $q$ preserves the equivalence relation. \\

If, for any $f:\I\to K$, with $K$ a poset, that is a pre-order morphism and that preserves the equivalence relation, there is a unique $\ov{f}$ that is a poset morphism such that $f=\ov{f}\circ q$, then we will say that $q$ verifies the universal property $(P)$.

\end{defn}

Let us note $\I/\sim$ as $\ov{\I}$.\\

\begin{prop}

If two pre-order morphism, $p_1:\I\to J$, $p_2:\I \to K$, that preserve the equivalence relation, verify the universal property $(P)$, then there is a poset isomorphism between $J$ and $K$.\\

Let us define $p$ as,

$$\begin{array}{ccccc}
p& : &\I & \to &\ov{ \I}\\
& & A & \mapsto &[A]\\
\end{array}$$

There is a unique order $\overline{\leq}$ on $\ov{\I}$ such that $p:(\I,\leq)\to (\ov{\I},\overline{\leq})$ is a pre-order morphism and verifies $(P)$. It verifies for all $\A,\B\in \I$,

\begin{equation}
[\A]\ov{\leq} [\B]\iff  \A \leq \B.
\end{equation}
\smallskip

Furthermore one can define a union on $\ov{\I}$ and an intersection such for all $\A$,$\B$,
 
\begin{equation}\label{quotient_union}
[\A \cup \B]=[\A]\cup[\B], \quad [\A \sqcap \B]=[\A]\sqcap[\B] \quad .
\end{equation}

Equations \ref{property1},\ref{property2},\ref{property3} stay true on $\ov{\I}$. Let us recall them, $\A,\B,\C, \D\in \ov{\I}$,

\begin{equation}
\A\sqcap \B = \B\sqcap \A, \quad (\A\cup\B)\sqcap \C= (\A\sqcap \C)\cup (\B\sqcap \C),\quad \A\sqcap \B\leq \A \quad .
\end{equation}
\begin{equation}
[\A\leq \C \wedge \B \leq \D ] \implies \A\cup \B \leq \C\cup \D.
\end{equation}
\begin{equation}
[\A\leq \C \wedge \B \leq \D ] \implies \A\sqcap \B \leq \C\sqcap \D .
\end{equation}

\end{prop}

\begin{proof}
Let $p_1:\I\to J$, $p_2:\I \to K$, that preserve the equivalence relation, verify the universal property $(P)$. Then there is $\ov{p_1}$, $\ov{p_2}$, two poset morphisms, such that $p_1=\ov{p_1}\circ p_2$, $p_2=\ov{p_2}\circ p_1$. So $p_1=\ov{p_1}\circ \ov{p_2} \circ p_1$, in other words the following diagram commutes:

\begin{equation}
\begin{tikzpicture}[baseline=(current  bounding  box.center),node distance=2cm, auto]
\node (A) {$\I$ };
\node (B) [right of=A] {$J$};
\node (C) [below of =B] {$J$};
\draw[->] (A) to node  {$p_1$} (B);
\draw[->] (B) to node {$\ov{p_1}\circ \ov{p_2}$} (C);
\draw[->] (A) to node [left]{$p_1$} (C);

\end{tikzpicture}
\end{equation}

But $p_1=\id \circ p_1$, therefore by the unicity statement in $(P)$, $\ov{p_1}\circ \ov{p_2}=\id$. One also has that $p_2=\ov{p_2}\circ \ov{p_1} \circ p_2$, so $\ov{p_2}\circ \ov{p_1} =\id$. Therefore $\ov{p_1}$ is a poset isomorphism between $J$ and $K$.\\

Le us define the following relation for $x,y\in \ov{\I}$,

\begin{equation}
x\ov{\leq}y \quad \iff \quad \exists \A, \exists \B,\quad x=[\A] \quad \wedge \text{ } y=[\B] \quad \wedge \quad  \A\leq \B \quad . 
\end{equation}

$(\ov{\I}, \ov{\leq})$ is a poset (see E.III.3 \citep{Bourbaki}).\\

Let $f:\I\to K$, with $K$ a poset, be a pre-order morphism that preserves the equivalence relation. By the universal property for the quotient map,there is a unique $\ov{f}$ such that $f=\ov{f}\circ p$. For $\A,\B\in \I$, suppose $[\A]\ov{\leq} [\B]$, then $\A\leq \B$ and $f(\A)\leq f(\B)$. $\ov{f}([\A])=f(\A)$ and $\ov{f}([\B])=f(\B)$, so $\ov{f}([\A])\leq \ov{f}([\B])$. Therefore $\ov{f}$ is a poset morphism.\\

Suppose that there are two orders $\leq_1$ and $\leq_2$ on $\ov{\I}$ such that $p:(\I,\leq)\to (\ov{\I},\leq_1)$ and $p:(\I,\leq)\to (\ov{\I},\leq_2)$ are pre-order morphism and verify $(P)$. Then there is $\ov{p}$, a poset isomorphism, such that $p=\ov{p}\circ p$. But by the universal property for the quotient map, $\ov{p}=\id$. Therefore $\id:(\ov{\I},\leq_1)\to (\ov{\I},\leq_2)$ is a poset isomorphism. For all $x,y\in \ov{\I}$,

 $$x\leq_1 y \iff x\leq_2 y \quad .$$
 
 So $\leq_1=\leq_2$.\\

Let $\A, \B, \C, \D \in \I$, such that $\A\sim \C$, $\B \sim \D$, then by property Eq \ref{property2}, $\A\cup \B \leq \C\cup \D$ and $ \C\cup \D \leq \A\cup \B$, so $\A\cup \B \sim \C\cup \D $.\\

Similarly, by property Eq \ref{property3} $\A\sqcap \B \leq \C\sqcap \D$ and $ \C\sqcap \D\leq \A\sqcap \B $, so $\A\sqcap \B\sim \C\sqcap \D$. Therefore the union and intersection given by Eq \ref{quotient_union} are well defined. \\

For any $\A,\B,\C, \D\in \I$,

$$[\A]\sqcap [\B]=[\A\sqcap \B] =[\B]\sqcap [\A]= [\B\sqcap \A].$$

$$ ([\A]\cup[\B])\sqcap [\C]= [(\A\cup\B)\sqcap \C]=[(\A\sqcap \C)\cup (\B\sqcap \C)]=([\A]\sqcap [\C])\cup ([\B]\sqcap [\C]).$$

$$[\A\sqcap \B]\leq [\A].$$

Therefore, $[\A]\sqcap [\B]\leq [\A]$. And one proceeds similarly for the two other properties. 

\end{proof}

We will now also note $\ov{\leq}$ as $\leq$.\\

\begin{ex} Consider $I=\{1,2,3,4\}$. $\{\{1, 2\},\{1,3\}\}\leq \{I\}$ and this is true for any element of $\I$.

$$\{\{1, 2\},\{1,3\}\}\cup \{\{2\}\}=\{\{1, 2\},\{1,3\},\{2\}\}\sim \{\{1, 2\},\{1,3\}\}.$$

$$\{\{1, 2, 4\},\{1,3\}\} \sqcap \{\{2,4\},\{2,3\}\}=\{ \{2,4\},\{2\},\emptyset, \{3\}\}\sim \{ \{2,4\},\{3\}\}.$$

\end{ex}

\begin{rem}\label{rem1} By construction, any section of $p$ induces a poset isomorphism. For example the application that sends $\A$ to its lower set induces a section $s:[\A]\mapsto \hat{\A}$ of $p$; $\widehat{\I}$ and $p_{|\U(\Pa(I))}$ is a poset isomorphism. On $\U(\Pa(I))$, $\leq$ is equal to the inclusion $\subseteq$ and $\sqcap=\cap$.
\end{rem}

\subsection{Increasing function from $\PP(I)$ to the poset of factorisation spaces}

Let us denote $\pp$ the poset of factorization spaces.

\begin{prop}\label{morph}
Let,

$$\begin{array}{ccccc}
\Phi& : &\I & \to & \pp\\
& & \A & \mapsto &\g_\A\\
\end{array}$$
$$\begin{array}{ccccc}
\ov{\Phi}& : &\ov{\I} & \to & \pp\\
& & [\A] & \mapsto &\g_\A\\
\end{array}$$

$\ov{\Phi}: (\ov{\I},\leq)\to (\pp,\subseteq)$ is a poset morphism. For all $\A,\B\in \I, \Phi(\A\cup \B)=\Phi(\A).\Phi(\B) $, $\ov{\Phi}([\A]\cup [\B])=\ov{\Phi}([\A]).\ov{\Phi}([\B]) $.\\

If for all $i\in I$, $|E_i|\geq 2$ then $\ov{\Phi}$ is injective and is a poset isomorphism.

\end{prop}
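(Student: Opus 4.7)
I would split the proof into (a) well-definedness plus order-preservation of $\ov\Phi$, (b) the union identity $\Phi(\A\cup\B)=\Phi(\A)\cdot\Phi(\B)$, (c) injectivity of $\ov\Phi$ under $|E_i|\geq 2$, and (d) order-reflection. Parts (a), (b), (d) are essentially formal; the real content is (c).

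For (a), the pointwise observation $a\subseteq b \Rightarrow \g_a\subseteq\g_b$ is immediate from $\pi_a=\pi_a^b\circ\pi_b$. Given $\A\leq\B$, I would choose a map $\phi:\A\to\B$ with $a\subseteq\phi(a)$ for every $a\in\A$ and regroup $\prod_{a\in\A}f_a$ as $\prod_{b\in\B}\bigl(\prod_{a\in\phi^{-1}(b)}f_a\bigr)$, whose inner products lie in $\g_b$ by closure of $\g_b$ under the multiplicative law. This yields $\g_\A\subseteq\g_\B$, from which both well-definedness on equivalence classes and monotonicity of $\ov\Phi$ follow. For (b), one inclusion uses that $\g_{\A\cup\B}$ is closed under product and contains both $\g_\A$ and $\g_\B$; the other splits a product indexed by $\A\cup\B$ into $\A$-factors and $\B\setminus\A$-factors. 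The statement descends to $\ov\Phi$ via the quotient formula Eq.~(\ref{quotient_union}).

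The heart of the proof is (c). I would take logarithms to identify $(\g,\cdot)$ with the additive vector space $(\R^E,+)$, sending $\g_a$ to $V_a:=\{g\circ\pi_a:g\in\R^{E_a}\}$ and $\g_\A$ to $V_\A:=\sum_{a\in\A}V_a$; it then suffices to show $V_\A=V_\B \Rightarrow \A\sim\B$. I would then reduce to the binary case: using $|E_i|\geq 2$, pick distinct $x_i^0,x_i^1\in E_i$, set $E'=\prod_i\{x_i^0,x_i^1\}\cong\{0,1\}^I$, and restrict functions to $E'$, which sends $V_a$ surjectively onto its analogue $V'_a\subseteq\R^{E'}$. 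In the Walsh character basis $\chi_S(x)=\prod_{i\in S}(-1)^{x_i}$, $S\subseteq I$, one has $V'_b=\mathrm{span}\{\chi_S:S\subseteq b\}$ and hence $V'_\B=\mathrm{span}\{\chi_S:S\in\hat\B\}$, with $\hat\B$ the saturation of Remark~\ref{rem1}. Equality $V_\A=V_\B$ therefore forces $\hat\A=\hat\B$, so every $a\in\A$ lies inside some $b\in\B$ and vice versa, i.e., $\A\sim\B$. This $2$-point reduction followed by the character/Möbius argument is the main obstacle: it is what exhibits a family inside $\g_a$ rich enough to be detected by $\g_\B$ only when $a$ is actually covered by some $b\in\B$. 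Finally for (d), combining (b) with injectivity gives $\ov\Phi([\A])\subseteq\ov\Phi([\B]) \iff \ov\Phi([\A\cup\B])=\ov\Phi([\B]) \iff [\A\cup\B]=[\B] \iff [\A]\leq[\B]$, so $\ov\Phi$ is a poset isomorphism onto its image.
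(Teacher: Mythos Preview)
Your argument is correct. Parts (a) and (b) match the paper's proof essentially verbatim. The real divergence is in (c) and (d): the paper does not prove injectivity or order-reflection at all, but simply invokes Corollary~2 of \cite{yeung} (stating $\g_\A=\g_\B\iff\hat\A=\hat\B$) and remarks that its proof already yields the sharper implication $\g_\A\subseteq\g_\B\Rightarrow\hat\A\subseteq\hat\B$, from which order-reflection is immediate. You instead give a self-contained argument: restrict to a two-point subset $E'\cong\{0,1\}^I$ (this is exactly where $|E_i|\geq2$ is consumed), and read off $\hat\B$ from $V'_\B$ via the Walsh basis. This buys independence from the external reference and makes the role of the hypothesis transparent. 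Your derivation of (d) from injectivity plus the union identity, via $\ov\Phi([\A])\subseteq\ov\Phi([\B])\iff\ov\Phi([\A\cup\B])=\ov\Phi([\B])\iff[\A\cup\B]=[\B]$, is a clean semilattice trick the paper does not use; note however that your Walsh computation in (c) already gives order-reflection directly, since $V'_\A\subseteq V'_\B$ forces $\hat\A\subseteq\hat\B$ by linear independence of the $\chi_S$, so (d) is in fact redundant once (c) is done this way.
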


Let us remark that for all $a,b\subseteq I$ such that $a\subseteq b$, $\g_a\subseteq \g_b$ and that for all $a\in \A$, $\g_a\subseteq \g_\A$.\\

Indeed, $\pi_a=\pi^b_{a}\circ \pi_b$ so for all $f:E_a\to \R_{>0}$, $f\circ \pi_a=(f\circ\pi^b_{a})\circ \pi_b$, so $f\circ \pi_a \in \g_b$. Let us note $1$ the constant function equal to $1$. For all $a\subseteq I $, $1\in \g_\emptyset\subseteq \g_a$. For $a\in \A$, $f\in \g_a$, $f=f \underset{b\in \A \setminus \{a\}}{\prod}1$, so $f\in \g_\A$. \\

Let us now prove Proposition \ref{morph}.
\begin{proof}
Let $\A,\B \in \I$ such that $\A\leq \B$ and $f\in \g_\A$ such that $f= \underset{a\in \A}{\prod}g_a $. For all $a \in \A$ there is $b(a)\in \B$ such that $a \subseteq b(a)$, so $g_a\in \g_{b(a)}\subseteq \g_\B$ and $\underset{a\in \A} {\prod}g_a\in \g_\B$ as $\g_\B$ is a vector space. So $\g_\A\leq \g_\B$.\\

Let $\A\leq \B$ and $\B\leq \A$ then $\g_\A\subseteq \g_\B$ and $\g_\B\subseteq \g_\A$, then $\g_\A=\g_\B$ and $\ov{\Phi}$ is well defined and is a poset morphism.\\

For all $\A,\B\in \I$, $\Phi(\A)$ and $\Phi(\B)$ are subspaces of $\Phi(\A\cup \B)$ so $\Phi(\A).\Phi(\B)\subseteq \Phi(\A\cup \B)$. For all $a\in \A\cup \B$, $\g_a \subseteq \Phi(\A).\Phi(\B)$; $\Phi(\A).\Phi(\B)$ also being a vector space, $\Phi(\A\cup \B)\subseteq \Phi(\A).\Phi(\B)$.\\

If for all $i\in I$, $|E_i|\geq 2$ , Corollary 2 in \citep{Yeung} stipulates that $\g_\A=\g_\B$ if and only if $\hat{\A}=\hat{\B}$ but the proof of this results shows that if $\g_\A\subseteq \g_\B$ then $\hat{\A}\leq \hat{\B}$. So $\Phi_{|\hat{\I}}$ is injective therefore so is $\ov{\Phi}$ by remark \ref{rem1}. Furthermore $\ov{\Phi}([\A])\subseteq \ov{\Phi}([\B])$ implies $[\A]\leq [\B]$, so $\ov{\Phi}$ is a poset isomorphism.

\end{proof}

\begin{rem}\label{remark} Proposition \ref{morph} is a very general property for any increasing function $\Gamma$ from any poset $(\A,\leq)$ to $\vect V$ the set of vector subspaces of a vector space $V$. Indeed let $\U,\V\in \Pa(\A)$, $\sum\limits_{a\in \U} \Gamma(a) +\sum\limits_{b\in \V} \Gamma(b)= \sum\limits_{a\in \U\cup \V} \Gamma(a)$, and if $\U\leq \V$, in the same sense than in Definition \ref{pre-order}, then $ \sum\limits_{a\in \U} \Gamma(a) \subseteq  \sum\limits_{a\in \V}\Gamma(a)$. We enounced it as a proposition in order to clarify the presentation, as we use it as a know fact in later proofs.
\end{rem}

\section{ Intersection property for factorisations on finite posets} \label{chapitre-1:section-intersection-1}

In this section we still assume that $I$ is finite. For $a,b,c \subseteq I$ such that $b\cup c =a$ and $b\cap c =\emptyset$, the map $\pi^a_{(c,b)}:E_a \to E_b\times E_c$ is a bijection. We will note for $u\in E_b, v\in  E_c$, ${\pi^{a}_{(c,d)}}^{-1}(u,v)$ as $uv$, in particular for $x\in E$, $x_a= \pi^{a}_{b}(x_a)\pi^{a}_{c}(x_a)=x_b x_c$. Thus we can also write, for any $a,b\subseteq I$, $x_a = x_{a\cap b}x_{a\cap \overline{b}}$.

\begin{lem} \label{decomp}

Let $a\subseteq I$, $\B\in \I$,

\begin{equation}
\g_a \cap \g_\B \subseteq \g_{\{a\}\sqcap \B}.
\end{equation}
\end{lem}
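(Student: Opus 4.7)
The plan is to start from an arbitrary $f \in \g_a \cap \g_\B$ and manufacture an explicit $(\{a\}\sqcap\B)$-factorisation of it. From $f \in \g_\B$ I extract a writing $f = \prod_{b\in\B} f_b$ with each $f_b \in \g_b$ (depending only on $x_b$), and from $f \in \g_a$ I use only the fact that $f$ factors through $\pi_a$. These are the two inputs the proof will rely on.

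The heart of the argument is a freezing trick. Fix once and for all a reference point $x^0 \in E$. Using the partition of $b$ into $a\cap b$ and $b\cap \ov a$, which yields the bijection $E_b \simeq E_{a\cap b}\times E_{b\cap \ov a}$ and the writing $x_b = x_{a\cap b}\, x_{b\cap \ov a}$ recalled just before the lemma, define for each $b \in \B$
$$g_b(x) \;=\; f_b\bigl(x_{a\cap b}\, x^0_{b\cap \ov a}\bigr).$$
By construction, $g_b$ depends only on the coordinates of $x$ in $a\cap b$, hence $g_b \in \g_{a\cap b}$.

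Next I compute
$$f(x) \;=\; f\bigl(x_a\, x^0_{\ov a}\bigr) \;=\; \prod_{b\in\B} f_b\!\left(\bigl(x_a\, x^0_{\ov a}\bigr)_b\right) \;=\; \prod_{b\in\B} g_b(x),$$
where the first equality uses $f \in \g_a$ (freezing coordinates outside $a$ does not alter $f$) and the last uses $\bigl(x_a\, x^0_{\ov a}\bigr)_b = x_{a\cap b}\, x^0_{b\cap \ov a}$. Regrouping the product by the value of $c = a\cap b$ via $h_c = \prod_{\{b\in\B\,:\,a\cap b = c\}} g_b \in \g_c$ exhibits $f$ as an element of $\g_{\{a\}\sqcap\B}$.

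I do not foresee a serious obstacle: the only subtlety is that the substitution ``replace $x_{\ov a}$ by $x^0_{\ov a}$'' is legitimate precisely because $f \in \g_a$, and strict positivity of the target plays no role, so the same argument would carry over verbatim to $\R^E$ in place of $(\R_{+,*})^E$.
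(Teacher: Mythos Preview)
Your argument is correct and follows essentially the same route as the paper: both proofs freeze the coordinates outside $a$ at a fixed reference value (your $x^0_{\ov a}$, the paper's $c_{\ov a}$), use $f\in\g_a$ to justify that this substitution leaves $f$ unchanged, and read off from the $\B$-factorisation that each resulting factor depends only on $x_{a\cap b}$. Your final regrouping by the value of $a\cap b$ is a harmless bookkeeping step that the paper leaves implicit (since $\g_{\{a\}\sqcap\B}$ is a vector subspace, products of several factors lying in the same $\g_{a\cap b}$ are automatically absorbed).
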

\begin{proof}

Let $f\in \g_a$ and $(g_b)_{b\in \B}\in \underset{b\in \B}{\prod}\g_b$ such that for all $x\in E$,

\begin{equation}
f(x)= \underset{b\in \B}{\prod}g_b(x)\quad .
\end{equation}

There are $f_a, (\tilde{g}_b)_{b\in \B}$ such that for all $x\in E$, $b\in \B$, $f(x)=f_a(x_a)$, $g_b(x)=\tilde{g}_b(x)$.\\

For all $x\in E$,

$$f_{a}(x_{a})=\prod \limits_{b\in B} \tilde{g}_b(x_{b\cap a}x_{b\cap \overline{a}})$$

Let $c_{\overline{a}}\in E_{\ov{a}}$ then, $\pi_a(x_a c_{\ov{a}})=x_a$ and $\pi_{b}(x_a c_{\ov{a}})= (x_{b\cap a}c_{b\cap \ov{a}})$. So,

$$f_a(x_a)=\prod \limits_{b\in B} \tilde{g}_b(x_{b\cap a} c_{b\cap \overline{a}})$$

Let us pose for all $b\in \B$, $g_{1,b}(x_{b\cap a})=  \tilde{g}_b(x_{b\cap a} c_{b\cap \overline{a}})$, then $f=\prod \limits_{b\in B} g_{1,b}\circ\pi_{b\cap a}$.

\end{proof}

\begin{thm}[Intersection property]\label{chapitre-1:central}

Let $I$ be a finite set and let $(E_i)_{i\in I}$ be family of non necessarily finite sets.\\

For $\A,\B\in \I$,  $f\in \R_{>0}^E$, $(f_a)_{a\in \A}\in \underset{a\in \A}{\prod}\R_{>0}^{E_a}$ and $(g_b)_{b\in \B}\in \underset{b\in \B}{\prod}\R_{>0}^{E_b}$ such that, for all $x\in E$,

$$f(x)=\underset{a\in \A}{\prod}f_a(x_a)= \underset{b\in \B}{\prod}g_b(x_b)\quad .$$

There is $(h_{a,b})_{(a,b)\in \A\times \B}\in \underset{(a,b)\in \A\times \B}{\prod}\R_{>0}^{E_{a\cap b}}$ such that for all $x\in E$,
$$f(x)=\underset{(a,b)\in \A\times \B}{\prod}h_{a,b}(x_{a\cap b}).$$

Equivalently,

\begin{equation}\label{newintersecteq}
\g_\A \cap \g_\B\subseteq \g_{\A\sqcap \B}\quad . 
\end{equation}

\end{thm}

\begin{proof}

For $\A, \B \in \I$, $\A\sqcap \B\leq \A$, $\A\sqcap \B\leq \B$. Therefore by Proposition.(\ref{morph}) $\g_{\A\sqcap \B}\subseteq \g_\A \cap \g_\B$.\\

Let us prove the other inclusion by induction on $|\A|$. \\

$|\A|=1$ is the previous Lemma.\ref{decomp}.\\

Suppose that for all $\A,\B\in \I$ such that $|\A|=n\in \N$, $\g_\A \cap \g_\B\subseteq \g_{\A\sqcap \B}$.\\

Let $\A\in \I $, $|\A|=n+1$. Take $\alpha\in  \A$, $|\A \setminus \{\alpha\}|=n$. Pose $\C=\A \setminus \{\alpha\}$. Let $f \in \g_\A \cap \g_\B$, then there is $h_1\in \g_{\alpha}$, $f_1\in \g_\C$, $g\in \g_\B$ such that $f=h_1.f_1=g \quad$.\\

So $h_1=\frac{g}{f_1}$ and  $h_1\in \g_{\C}.\g_\B$. So by Proposition.(\ref{morph}), $h_1\in \g_{\C\cup \B}$. Then by Lemma.\ref{decomp} $h_1\in \g_{(\C\cup \B)\sqcap \{\alpha\}}$. But $(\C\cup \B)\sqcap \{\alpha\}= (\C\sqcap \{\alpha\})\cup (\B\sqcap \{\alpha\})$.\\
 
 So $h_1\in \g_{\C \sqcap \{\alpha\}} . \g_{\B\sqcap \{\alpha\}}$. Furthermore $f_1\in  \g_{\C}$ so $f=h_1.f_1 \in   \g_{\C}. \g_{\C \sqcap \{\alpha\}} . \g_{\B\sqcap \{\alpha\}}$. But $\g_{\C \sqcap \{\alpha\}}\subseteq  \g_{\C}$ so $ \g_{\C}. \g_{\C \sqcap \{\alpha\}}\subseteq  \g_{\C}$ (it is even equal).\\

So there is $f_2\in \g_{\C}$, $h_2\in  \g_{\B\sqcap \{\alpha\}}$ such that $g=h_2.f_2$. Therefore $f_2= \frac{g}{h_2}$. But $\g_{\B\sqcap \{\alpha\}}\subseteq \g_\B$ so $f_2\in \g_\B$.\\

Therefore by the induction hypothesis, $f_2\in \g_{\C \sqcap \B}$, and so $f\in \g_{\B\sqcap \{\alpha\}}\g_{\C \sqcap \B}$. One remarks that $( \{\alpha\}\sqcap \B)\cup (\C \sqcap \B)=\A\sqcap \B$ so $f\in \g_{\A\sqcap \B}$. Which ends the proof by induction.\\
\end{proof}

\begin{cor}\label{Morphisminter} For all $\A,\B\in \I$,

\begin{equation}\label{newintersect}
\g_\A \cap \g_\B= \g_{\A\sqcap \B}=\g_{\hat{\A}\cap \hat{\B}} \quad.
\end{equation}

Which can be rewritten as, for all $\A,\B \in \ov{\I}$,

\begin{equation}
 \ov{\Phi}(\A\sqcap \B)=\ov{\Phi}(\A) \cap \ov{\Phi}(\B) \quad .
\end{equation}

\end{cor}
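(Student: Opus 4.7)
The proof is largely a repackaging of Theorem \ref{central}; the core content is already proved, and what remains is to assemble the two inclusions and to identify $\A\sqcap\B$ with $\hat{\A}\cap\hat{\B}$ up to $\sim$.

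\textbf{Step 1: First equality.} For the inclusion $\g_{\A\sqcap\B}\subseteq \g_\A\cap\g_\B$, I would invoke the property $\A\sqcap\B\leq \A$ and (by symmetry) $\A\sqcap\B\leq\B$ from Eq.(\ref{property1}), then apply Proposition.(\ref{morph}), which states that $\ov{\Phi}$ is a poset morphism, to get $\g_{\A\sqcap\B}\subseteq\g_\A$ and $\g_{\A\sqcap\B}\subseteq\g_\B$. The reverse inclusion $\g_\A\cap\g_\B\subseteq\g_{\A\sqcap\B}$ is exactly Theorem.(\ref{central}).

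\textbf{Step 2: Second equality.} For $\g_{\A\sqcap\B}=\g_{\hat{\A}\cap\hat{\B}}$, I would prove $\A\sqcap\B\sim\hat{\A}\cap\hat{\B}$ in $\I$, then conclude by well-definedness of $\ov{\Phi}$ on equivalence classes. For $\A\sqcap\B\leq\hat{\A}\cap\hat{\B}$: any element of $\A\sqcap\B$ has the form $a\cap b$ with $a\in\A$, $b\in\B$, and satisfies $a\cap b\subseteq a$ and $a\cap b\subseteq b$, hence $a\cap b\in\hat{\A}\cap\hat{\B}$, so it is contained in an element of $\hat{\A}\cap\hat{\B}$ (namely itself). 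For $\hat{\A}\cap\hat{\B}\leq\A\sqcap\B$: any $c\in\hat{\A}\cap\hat{\B}$ satisfies $c\subseteq a$ and $c\subseteq b$ for some $a\in\A$, $b\in\B$, so $c\subseteq a\cap b\in\A\sqcap\B$. This gives $[\A\sqcap\B]=[\hat{\A}\cap\hat{\B}]$ in $\ov{\I}$, and therefore $\g_{\A\sqcap\B}=\g_{\hat{\A}\cap\hat{\B}}$.

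\textbf{Step 3: Reformulation.} The equivalent statement in $\ov{\I}$ is immediate: by the definition of $\ov{\Phi}$ and by Eq.(\ref{quotient_union}),
$$\ov{\Phi}([\A]\sqcap[\B])=\ov{\Phi}([\A\sqcap\B])=\g_{\A\sqcap\B},$$
while $\ov{\Phi}([\A])\cap\ov{\Phi}([\B])=\g_\A\cap\g_\B$, so the first equality of Step 1 translates directly into $\ov{\Phi}(\A\sqcap\B)=\ov{\Phi}(\A)\cap\ov{\Phi}(\B)$.

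There is essentially no obstacle: all the analytic difficulty is contained in Theorem.(\ref{central}) (and ultimately in Lemma.(\ref{decomp})). The only thing to check carefully is the order-theoretic bookkeeping in Step 2, where one must not confuse the literal set inclusion of families of subsets with the pre-order $\leq$, but the verification is the short two-line argument above.
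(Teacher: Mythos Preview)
Your proof is correct and follows the same approach as the paper: the paper's own proof of the corollary consists only of the single line ``$\A\sqcap \B \leq \A$ and $\A\sqcap \B \leq \B$, therefore $\g_{\A\sqcap \B}\subseteq \g_\A$ and $\g_{\A\sqcap \B}\subseteq \g_\B$'', with the other inclusion delegated to Theorem~\ref{central}. Your Step~2 is in fact more explicit than the paper, which leaves the identification $\g_{\A\sqcap\B}=\g_{\hat{\A}\cap\hat{\B}}$ implicit via Remark~\ref{rem1} (where $\sqcap=\cap$ on $\hat{\I}$).
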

\begin{proof}
$\A\sqcap \B \leq \A$ and $\A\sqcap \B \leq \B$ therefore $\g_{\A\sqcap \B}\subseteq \g_\A$ and $\g_{\A\sqcap \B}\subseteq \g_\B$.\\

\end{proof}

\section{Extension for infinite posets} \label{chapitre-1:section-intersection-2}

In this section $I$ is any set; let us now use the summation convention instead of the product one. We would like to give a similar definition of $\g_{\mathcal{A}}$ but for infinite posets. If for any $\mathcal{A}\subseteq \mathcal{P}(I)$, we defined $U(\mathcal{A})$ as $\sum\limits_{\substack{a\in \mathcal{A}\\ |a|<+\infty }} U(a)$ then $U(I)=0$. One needs to consider only lower sets in $\U(\Pa(I))$.

Let us call $\GG=\GG_{\Pa(I)}$ and $\GP$ the poset constituted of the $\GG_{\A}$; let $\Psi$ be such that,

\begin{equation}
\begin{array}{ccccc}
\Psi& : &\U(\Pa(I)) & \to & \GP\\
& & \A & \mapsto &\GG_\A\\
\end{array}
\end{equation}

In particular, 

\begin{equation}
\GG(\A)= \ln \g_{\A}
\end{equation}

\begin{cor}\label{inf-intersect} For all $\A,\B\in \U(\Pa(I))$,

\begin{equation}
\GG_\A \cap \GG_\B=\GG_{\A\cap \B}.
\end{equation}

\end{cor}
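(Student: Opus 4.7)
The plan is to reduce to the finite case by exploiting that sums of vector subspaces are, by definition, finite, so any element of $\mathfrak{g}_\A\cap\mathfrak{g}_\B$ is witnessed by a finite amount of data sitting inside a finite slice of $I$, where Theorem \ref{central} applies.

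First I would dispatch the easy inclusion $\mathfrak{g}_{\A\cap \B}\subseteq \mathfrak{g}_\A\cap \mathfrak{g}_\B$. Since $\A\cap\B\subseteq \A$ and $\A\cap\B\subseteq \B$ in $(\hat{\I},\subseteq)$, Proposition \ref{morph-inf} (monotonicity of $\Psi$) gives $\mathfrak{g}_{\A\cap\B}\subseteq \mathfrak{g}_\A$ and $\mathfrak{g}_{\A\cap\B}\subseteq \mathfrak{g}_\B$, hence the inclusion.

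For the reverse inclusion, I would take $f\in \mathfrak{g}_\A\cap \mathfrak{g}_\B$. By the finite-sum definition of $\mathfrak{g}_\A$ and $\mathfrak{g}_\B$, there exist finite subfamilies $\A_0=\{a_1,\dots,a_n\}\subseteq \A$ and $\B_0=\{b_1,\dots,b_m\}\subseteq \B$, all of whose members have finite cardinal, such that $f\in \mathfrak{g}_{\A_0}$ and $f\in \mathfrak{g}_{\B_0}$. Let $J=\bigcup_i a_i\cup \bigcup_j b_j$, which is a finite subset of $I$ containing every member of $\A_0$ and $\B_0$. Restricting the construction of Section 2 to the finite index set $J$ and the family $(E_i)_{i\in J}$, $f$ can be treated as an element of the corresponding space, and $\A_0,\B_0\in \mathcal{P}(\mathcal{P}(J))$. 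The additive version of Theorem \ref{central} (obtained by applying the theorem in $\R_{+,*}^{E_J}$ and composing with $\exp$/$\log$) then yields $f\in \mathfrak{g}_{\A_0\sqcap \B_0}$, where $\A_0\sqcap \B_0=\{a_i\cap b_j:1\leq i\leq n,\ 1\leq j\leq m\}$.

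Finally I would use the saturation hypothesis: for any $i,j$ one has $a_i\cap b_j\subseteq a_i\in \A$, and $\A$ being saturated in $\mathcal{P}(I)$ gives $a_i\cap b_j\in \A$; symmetrically $a_i\cap b_j\in \B$. Therefore $\A_0\sqcap \B_0\subseteq \A\cap \B$, and since each $a_i\cap b_j$ is finite, $\mathfrak{g}_{\A_0\sqcap \B_0}\subseteq \mathfrak{g}_{\A\cap \B}$, so $f\in \mathfrak{g}_{\A\cap \B}$, which is what we wanted.

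The main obstacle is really just the conceptual one of transferring from the infinite index set $I$ to a finite slice $J$; once one observes that a sum of subspaces is a finite sum by definition, this reduction is automatic and the only substantive ingredient is Theorem \ref{central}. The saturation hypothesis on $\A$ and $\B$ is what allows $\A_0\sqcap \B_0$ to be absorbed back into $\A\cap \B$, and is the reason Corollary \ref{inf-intersect} is stated for $\hat{\I}$ rather than arbitrary subfamilies of $\mathcal{P}(I)$.
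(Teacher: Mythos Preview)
Your proof is correct and follows essentially the same route as the paper's: pick finite witnessing families inside $\A$ and $\B$, apply the finite intersection result (the paper cites Corollary~\ref{Morphisminter}, you invoke Theorem~\ref{central} after restricting to the finite index set $J$), and then use saturation of $\A$ and $\B$ to absorb $\A_0\sqcap\B_0$ into $\A\cap\B$. The only difference is presentational: you make the passage to a finite $J$ explicit and spell out the easy inclusion, while the paper leaves both implicit.
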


\begin{proof}
Let $f\in \GG_\A \cap \GG_\B$. There are by definition, $\C_1 \subseteq \A$, $\C_2 \subseteq \B$, that are of finite cardinal, such that $f\in \GG_{\C_1}$ and $f\in \GG_{\C_2}$. By Corollary \ref{Morphisminter}, $f\in \GG_{\C_1\sqcap \C_2}$. As $\C_1\sqcap \C_2\subseteq \A\cap \B$, $f\in \GG_{\A\cap \B}$. 
\end{proof}

We will now show that a stronger version of Corollary.(\ref{inf-intersect}) holds for the intersection on any family of elements of $\U(\Pa(I))$.

\begin{thm}\label{infi-intersect} For any family  $(\A_j)_{j\in J}$ of elements of $\U(\Pa(I))$,

\begin{equation}
\bigcap \limits_{j\in J} \GG_{\A_j}=\GG_{\bigcap \limits_{j\in J}\A_j}.
\end{equation}

\end{thm}

Before giving a proof of this result, let us first state the following lemma, 

\begin{lem}\label{vect-int}
Let $V_1,V_2$ be two vector subspaces of $\GG$. If for any finite $a\in \Pa(I)$, 

\begin{equation}
V_1\cap \GG_a \subseteq V_2\cap \GG_a.
\end{equation}

Then,

\begin{equation}
V_1\subseteq V_2
\end{equation}
\end{lem}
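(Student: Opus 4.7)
The plan is to reduce the inclusion $V_1\subseteq V_2$ to the hypothesis by showing that every element of $\mathfrak{g}$ already lives inside some $\mathfrak{g}_a$ with $a$ a \emph{finite} subset of $I$, so that the finite test provided by the hypothesis suffices to witness membership in $V_2$. I would pick an arbitrary $v\in V_1$ and show $v\in V_2$.

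The first step is to unpack the definition of $\mathfrak{g}=\mathfrak{g}_{\Pa(I)}=\sum_{a\in \Pa(I),\,|a|<\infty}\mathfrak{g}_a$. By the definition of a sum of a family of vector subspaces recalled just before this section, $v$ admits a representation as a \textbf{finite} sum $v=\sum_{i=1}^{n}g_i$ with $g_i\in\mathfrak{g}_{a_i}$ and each $a_i\subseteq I$ finite. Setting $a=\bigcup_{i=1}^{n}a_i$, this $a$ is still a finite subset of $I$; and since $a_i\subseteq a$ for every $i$, the inclusion $\mathfrak{g}_{a_i}\subseteq\mathfrak{g}_a$ noted just before Proposition \ref{morph} gives $g_i\in\mathfrak{g}_a$ for every $i$, hence $v\in\mathfrak{g}_a$.

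The second step is then to apply the hypothesis to this particular finite $a$. Since $v\in V_1\cap\mathfrak{g}_a$ and, by assumption, $V_1\cap\mathfrak{g}_a\subseteq V_2\cap\mathfrak{g}_a\subseteq V_2$, we conclude $v\in V_2$. As $v$ was arbitrary, $V_1\subseteq V_2$.

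There is really no obstacle here: the content of the lemma is precisely that the sum defining $\mathfrak{g}$ is a sum in the algebraic sense (only finite sums allowed), so any element of $\mathfrak{g}$ is already ``finitely supported'' and the hypothesis tested against finite $a$'s captures it. The only point to be careful about when writing the proof is to merge the finitely many $a_i$'s into a single finite $a$ before invoking the hypothesis, rather than trying to test each $g_i$ separately (which would not give $v\in V_2$ directly, since the $g_i$'s need not individually belong to $V_1$).
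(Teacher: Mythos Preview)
Your proof is correct and follows essentially the same route as the paper's: take $v\in V_1$, write it as a finite sum of elements from $\mathfrak{g}_{a_i}$ with each $a_i$ finite, set $a=\bigcup_i a_i$ (still finite) so that $v\in\mathfrak{g}_a$, and then apply the hypothesis to conclude $v\in V_2$. The paper's argument is the same, only written more tersely.
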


\begin{proof}
Let $v\in V_1$, there is a finite collection of finite subsets of $I$, $(a_k)_{1\leq k\leq n}$, such that, $v\in \underset{1\leq k\leq n}{\sum}\GG_{a_k}$.\\

Therefore $v\in \GG_{(\underset{1\leq k\leq n}{\bigcup} a_k)}$. But $\underset{1\leq k\leq n}{\bigcup} a_k$ is of finite cardinal. So $v\in V_2\cap \GG_{(\underset{1\leq k\leq n}{\bigcup} a_k)}\subseteq V_2$.

Therefore $V_1\subseteq  V_2$.
\end{proof}

A direct consequence of Lemma.(\ref{vect-int}) is that if for any finite $a\in \Pa(I)$, 

\begin{equation}
V_1\cap \GG_a = V_2\cap \GG_a.
\end{equation}

Then $V_1= V_2$.\\

\textit{Proof of the Theorem.(\ref{infi-intersect})}. Let $(\A_j)_{j\in J}$ be a family of elements of $\U(\Pa(I))$. Let $a\subseteq I$ of finite cardinal. \\

$\underset{j\in J}{\bigcap}  \GG_{\A_j} \cap \GG_a= \underset{j\in J}{\bigcap} \left(\GG_{\A_j} \cap \GG_a\right)$.\\

But, $\GG_{\A_j} \cap \GG_a =\GG_{\A_j \cap \widehat{\{a\}}}$. And $\{\GG_{\A_j \cap \widehat{\{a\}}}:\quad j\in J \}$ is finite, so $\underset{j\in J}{\bigcap} \left(\GG_{\A_j} \cap \GG_a\right)$ can be rewritten as a finite intersection and by Corollary (\ref{inf-intersect}),

$$\underset{j\in J}{\bigcap} \left(\GG_{\A_j} \cap \GG_a\right)=\GG_{\underset{j\in J}{\bigcap} (\A_j \cap \widehat{\{a\}})}\subset\GG_{\underset{j\in J}{\bigcap} \A_j} \cap \GG_a\quad .$$

By Lemma.(\ref{vect-int}), 

$$\bigcap \limits_{j\in J} \GG_{\A_j}\subseteq \GG_{\bigcap \limits_{j\in J}\A_j}\quad .$$

The other inclusion is always true (Remark (\ref{rem1})) as for any $i\in J$, $\bigcap \limits_{j\in J}\A_j\subseteq \A_i$.
\qed

\begin{rem} This proposition can also be stated in terms of the $\g_\A$ by taking the exponential: 

\begin{equation}
\bigcap \limits_{j\in J} \g_{\A_j}=\g_{\bigcap \limits_{j\in J}\A_j}.
\end{equation}

\end{rem}

\section{Applications} \label{chapitre-1:section-applications}

\subsection{Minimal factorisation}

In \citep{Yeung} a proof of the existence of a minimum factorisation is given, based on the existence of a decomposition into interaction subspaces, when $E$ is finite and $I$ finite. Let us recall that in a poset $\A$, $a\in \A$ is said to be a minimum if any $b\in \A$ is such that $a\leq b$. Let us give a proof of this result using Theorem \ref{infi-intersect} so without assuming that $I$ nor $E$ are finite.

\begin{cor}\label{Minfac2}

Let $I$ be any set and $E=\prod_{i\in I}E_i$ be the product of any collection of sets; for all $f\in \g$ let us call $\mathcal{F}(f)=\{ \g_\A | \quad f\in \g_\A\}$. $\mathcal{F}(f)$ admits a minimum and we say that $f$ admits a minimum decomposition.

\end{cor}

\begin{proof}
Let $\bA(f)=\{\A\in \U(\Pa(I)) | \quad f\in \g_\A\}$. From Theorem \ref{infi-intersect}, one has that,

$$\bigcap \limits_{\A\in \bA(f)} \g_{\A}=\g_{\bigcap \limits_{\A \in \bA(f)}\A}.$$

Any $\text{K} \in \mathcal{F}(f)$ contains $ \bigcap\limits_{\A\in \bA(f)} \g_{\A}$, therefore $\g_{\bigcap \limits_{\A \in \bA(f)}\A}$ is the minimum of $\mathcal{F}(f)$.

\end{proof}

\subsection{Markov properties and Hammersley-Clifford}\label{chapitre-1-section-4}
Let us consider four random variables $W,X,Y,Z$ taking values respectively in $E_0$, $E_1$, $E_2$, $E_3$  \textbf{finite sets}, with strictly positive joint law. Let us recall the law of $X$ conditionally to $Y$,

\begin{equation}
\forall (x,y)\in E_1\times E_2,\quad \p_{X|Y}(x,y)= \frac{\p_{X,Y}(x,y)}{\p_{Y}(y)}
\end{equation}

Conditional independence is usually defined as follows, 
\begin{equation}\label{condindep}
 X\independent Y |Z \quad \iff \quad \forall (x,y,z)\in E_1\times E_2 \times E_3, \quad \p_{(X,Y)|Z}(x,y,z)=\p_{X|Z}(x,z)\p_{Y|Z}(y,z)
\end{equation}

Let us pose $I=\{0,1,2,3\}$ we identify $\underset{i\in I}{\Pi} E_i$ with $E_0\times E_1\times E_2\times E_3$ by the following $x\mapsto(x(0),x(1),x(2),x(3))$ and then $\g_\A$ to sets in $\R^{E_0\times E_1\times E_2\times E_3}_{>0}$. Let $a=\{1,3\}$, $b= \{2,3\}$ and $\A=\{a,b\}$,

\begin{equation}
X\independent Y |Z \quad  \iff \quad \p_{X,Y,Z}\in  \g_\A.
\end{equation}

\begin{prop}[Bayesian or Graphoid intersection property]\label{Graphoid_interesect}
\begin{equation}
(X\independent Y |(Z,W))\wedge (X\independent W)|(Z,Y) \implies X\independent (Y,W)|Z.
\end{equation}
\end{prop}

\begin{proof}
Let $a=\{0,1,3\}$, $b=\{0,2,3\}$, $c=\{1,2,3\}$, $d=\{1,3\}$ and $\A=\{a,b\}$, $\B=\{b,c\}$, $\C=\{b,d\}$. $\A\sqcap \B\equiv \{a\cap c, b\}=\{d,b\}$ so $\g_\A \cap \g_\B \subseteq \g_\C$.
\end{proof}

\begin{cor}(Hammersley-Clifford)\label{HC}\\
 
 Let $G=(I,A)$ be a finite graph. For all strictly positive probability law, $P_X$, on a finite $E$,

\begin{equation}
P_X\in P(G) \iff P_X\in L(G)\iff P_X \in  \GG_\mathcal{C}
\end{equation}

\end{cor}

For any pair $(i,j)$ of elements of $I$ and for all probability law $P$,

\begin{equation}
X_i\independent X_j |X_{I\setminus \{i,j\}} \iff P_X\in \GG_{[i,j]}
\end{equation}

Let $\mathcal{A}_{P}=\underset{(i,j): \text{ } i\notin  \partial j}{\bigsqcap}[i,j]$. Similarly, for all $i\in I$,

\begin{equation}
X_i \independent X_{I \setminus (i\cup \partial i) }|X_{\partial i}\iff 
\mathbb{P}_X\in \GG_{[i]}
\end{equation}

Let $\mathcal{A}_{L}= \underset{i}{\bigsqcap}[i]$. The following lemma is the version of the Hammersley-Clifford on graphs that we will then translate to graphical models by applying $\Psi$. \\

\begin{lem}\label{graph_clique}

\begin{equation}
\hat{\mathcal{A}}_{L}=\hat{\mathcal{A}}_{P}= \mathcal{C}
\end{equation}
\end{lem}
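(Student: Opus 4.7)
The plan is to prove both equalities by showing $\mathcal{A}_P \subseteq \mathcal{C}$ and $\mathcal{A}_L \subseteq \mathcal{C}$ (so their saturations land inside $\hat{\mathcal{C}}$), together with $\mathcal{C} \subseteq \hat{\mathcal{A}}_P$ and $\mathcal{C} \subseteq \mathcal{A}_L$. A key preliminary observation is that $\mathcal{C}$ is downward closed --- any subset of a clique is a clique --- so $\hat{\mathcal{C}} = \mathcal{C}$, and the desired equalities will follow from these four containments.

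To analyze $\mathcal{A}_P$, I would unpack the definition of $\bigsqcap$: each element of $\mathcal{A}_P$ has the shape $\bigcap_{(i,j)}(I \setminus \{\sigma(i,j)\}) = I \setminus S$, where $\sigma(i,j) \in \{i,j\}$ is a choice for each non-adjacent pair and $S = \{\sigma(i,j) : i \notin \partial j\}$. If two distinct vertices $u,v$ both lie in $I \setminus S$, then $(u,v)$ cannot be non-adjacent --- otherwise $\sigma(u,v) \in S$ would exclude one of them --- so $u$ and $v$ are adjacent, showing $I \setminus S$ is a clique. Conversely, given any clique $K$, at most one endpoint of each non-adjacent pair $(i,j)$ lies in $K$, so I can choose $\sigma(i,j) \in \{i,j\} \setminus K$, producing an element of $\mathcal{A}_P$ that contains $K$. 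Hence $K \in \hat{\mathcal{A}}_P$.

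For $\mathcal{A}_L$, an element has the form $\bigcap_i c_i$ with $c_i \in \{I \setminus \{i\},\, i \cup \partial i\}$. Writing $T$ for the set of indices using the first choice and setting $S = I \setminus T$, a vertex $k$ lies in this intersection iff $k \in S$ and $k$ is either equal or adjacent to every $i \in S$. Two distinct elements of such an intersection must then be adjacent, so every element of $\mathcal{A}_L$ is a clique, giving $\mathcal{A}_L \subseteq \mathcal{C}$. For the reverse containment, given a clique $K$ I take $T = I \setminus K$ so that $S = K$; since every two distinct vertices of $K$ are adjacent, each $k \in K$ is adjacent to every other element of $K$, and the intersection is exactly $K$. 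Thus $K \in \mathcal{A}_L$ itself, no saturation needed.

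Combining these observations, $\mathcal{C} \subseteq \mathcal{A}_L \subseteq \hat{\mathcal{A}}_L \subseteq \hat{\mathcal{C}} = \mathcal{C}$ yields $\hat{\mathcal{A}}_L = \mathcal{C}$, and $\mathcal{C} \subseteq \hat{\mathcal{A}}_P \subseteq \hat{\mathcal{C}} = \mathcal{C}$ yields $\hat{\mathcal{A}}_P = \mathcal{C}$. The only mildly delicate step I anticipate is the characterization of an element of $\mathcal{A}_L$ as the set of vertices universal in $S = I \setminus T$; once that is in hand, everything else is bookkeeping on the binary choices $c_i$ and $c_{i,j}$ that index the elements of each $\bigsqcap$.
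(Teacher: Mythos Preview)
Your argument is correct. The combinatorial content is the same as in the paper --- a subset $a$ is a clique iff it avoids at least one endpoint of every non-edge, and $a$ is a clique iff $a\subseteq\{i\}\cup\partial i$ for every $i\in a$ --- but the packaging differs. The paper first invokes the identity $\widehat{\bigsqcap_j \A_j}=\bigcap_j \widehat{\A_j}$ (Remark~\ref{rem1}) and then tests membership of a generic $a$ in each $\widehat{[i,j]}$ or $\widehat{[i]}$ directly; you instead parametrise the elements of $\mathcal{A}_P$ and $\mathcal{A}_L$ by choice functions and analyse those elements before saturating. Your route yields the small bonus $\mathcal{C}\subseteq\mathcal{A}_L$ (every clique is literally realised, not merely dominated), while the paper's route avoids the bookkeeping on choice functions entirely. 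Either way the proof is short and the key step is identical.
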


\begin{proof}

Firstly, $\hat{\mathcal{A}}_{L}=\bigcap\limits_{(k,l): \text{ } k\notin  \partial l}\widehat{[k,l]}$. Let $a\in\hat{\mathcal{A}}_{L}$ and assume that $a$ is not a clique. So there is $i,j\in a $ such that $i\notin \partial j$. But $a\in \widehat{[i,j]}$, so $a\subseteq i\cup (I\setminus \{i,j\})$ or $a\subseteq j\cup (I\setminus \{i,j\})$. It is not possible as any of these two sets separate $i$ and $j$. So $a$ must be a clique. In other words, $\{i,j\}\subseteq a$ but $\{i,j\}\not\subseteq i\cup (I\setminus \{i,j\})$ and $\{i,j\}\not\subseteq j\cup (I\setminus \{i,j\})$ ($\{i,j\}\notin  \widehat{[i,j]}$). So if $a$ is not a clique of $G$, $a\not \in \hat{\mathcal{A}}_{L}$.\\

Suppose $a$ is a clique of $G$. Let $i,j\in I $ such that $i\notin \partial j$.  $i\cup (I\setminus \{i,j\})$ and $ j\cup (I\setminus \{i,j\})$ separate $i,j$. So a clique most be in only one of the two sets. To be more formal, for any subset $a$ of $I$, there is $b\subseteq I\setminus \{i,j\}$, such that $a=b$ or $a=b\cup i$ or $a=b\cup j$ or $a=b\cup \{i,j\}$. As $a$ is a clique $\{i,j\}\not\subseteq a$. So there is $b\subseteq I\setminus \{i,j\}$, such that $a=b$ or $a=b\cup i$ or $a=b\cup j$. Which is equivalent to saying that $a\in \hat{[i,j]}$.\\

So we proved that,

$$\hat{\mathcal{A}}_{P}= \mathcal{C}.$$

For the local case, $\hat{\mathcal{A}}_{L}$, one has to remark that $a$ is a clique of $G$ if and only if for all $i\in a$, $a\subseteq\{i, \partial i\}$ (for exemple see slide $6$ \citep{Langseth}).

\end{proof}

\textit{Proof of Corollary \ref{HC}.}
Let us remark that $P_X \in  P(G)$ if and only if $P_X \in \bigcap\limits_{(i,j): \text{ } i\notin  \partial j} \GG_{[i,j]}$ and similarly  $P_X \in  L(G)$ if and only if $P_X \in \bigcap \limits_{i\in I} \GG_{[i]}$.\\

As $P_X$ is stricly positive, by Corollary \ref{Morphisminter},

\begin{equation}
P_X \in  P(G) \iff P_X \in \GG_{\mathcal{A}_{P}}\iff P_X \in \GG_{\mathcal{C}}.
\end{equation}

\begin{equation}
P_X \in  L(G) \iff P_X \in G_{\mathcal{A}_{L}}\iff P_X \in \GG_{\mathcal{C}}.
\end{equation}
\qed

Similarly, when $G=(I,D)$ is any graph and $(E_i)_{i\in I}$ is any collection of sets, Lemma \ref{graph_clique} still holds and one has the following result which extends the Hammersley-Clifford theorem.\\

\begin{cor}\label{HC-infty}
 
\begin{equation}
\bigcap\limits_{(i,j): i\not\in \partial j}\g_{\widehat{[i,j]}}=\bigcap\limits_{i \in I}\g_{\widehat{[i]}}= \g_{\mathcal{C}}.
\end{equation}

\end{cor}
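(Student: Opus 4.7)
The plan is to mirror the proof of Corollary \ref{HC}, substituting the finite-index tool Corollary \ref{Morphisminter} by its unrestricted analogue Corollary \ref{infi-intersect}, and reusing Lemma \ref{graph_clique} in the form the paper asserts carries over to arbitrary graphs. Once those two ingredients are in hand, the corollary reduces to a short chain of substitutions.

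First I would remark that each $\widehat{[i,j]}$ and $\widehat{[i]}$ is saturated in $\Pa(I)$ and thus lies in $\hat{\I}$, so Corollary \ref{infi-intersect} applies directly to the families $(\widehat{[i,j]})_{(i,j):\, i\notin\partial j}$ and $(\widehat{[i]})_{i\in I}$, giving
$$\bigcap_{(i,j):\, i\notin\partial j}\g_{\widehat{[i,j]}} \;=\; \g_{\,\bigcap_{(i,j):\, i\notin\partial j}\widehat{[i,j]}}, \qquad \bigcap_{i\in I}\g_{\widehat{[i]}} \;=\; \g_{\,\bigcap_{i\in I}\widehat{[i]}}.$$
(I read the intersection over $(i,j)\in I\times I$ of the statement as restricted to non-adjacent pairs, matching the construction of $\mathcal{A}_P$; otherwise adjacent pairs would cut out factorisations supported on cliques containing them.) Next I would verify the set-theoretic identity $\bigcap_{j}\hat{\mathcal{A}}_j = \widehat{\bigsqcap_{j}\mathcal{A}_j}$ valid for any family in $\I$: any $a$ contained in some $b_j\in\mathcal{A}_j$ for each $j$ is contained in $\bigcap_{j}b_j\in\bigsqcap_{j}\mathcal{A}_j$, and the converse is immediate. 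Applied to the two families above this gives $\bigcap_{(i,j):\, i\notin\partial j}\widehat{[i,j]}=\widehat{\mathcal{A}_P}$ and $\bigcap_{i\in I}\widehat{[i]}=\widehat{\mathcal{A}_L}$.

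Invoking the extended Lemma \ref{graph_clique}, $\widehat{\mathcal{A}_P}=\widehat{\mathcal{A}_L}=\mathcal{C}$, and substituting closes the argument: both intersections equal $\g_{\mathcal{C}}$. The only delicate point is checking that Lemma \ref{graph_clique} really holds for arbitrary $\mathcal{G}$ and arbitrary $(E_i)_{i\in I}$. Its proof inspects a candidate clique $a$ and, for each pair $i,j\in a$ (respectively each $i\in a$), invokes membership of $a$ in the single set $\widehat{[i,j]}$ (respectively $\widehat{[i]}$); no induction on $|I|$ or on the size of cliques appears, and every quantifier remains well-formed in the infinite setting, so the combinatorial argument transfers verbatim. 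This is precisely the point flagged by the sentence preceding the corollary, and once it is granted the statement is a formal consequence of Corollary \ref{infi-intersect}.
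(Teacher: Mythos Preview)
Your proposal is correct and matches the paper's approach: the paper gives no explicit proof of Corollary~\ref{HC-infty} beyond the sentence ``Similarly, when $\mathcal{G}=(I,D)$ is any graph and $(E_i)_{i\in I}$ any collection of sets, Lemma~\ref{graph_clique} still holds and one has the following result,'' which is precisely the plan you carry out---apply Corollary~\ref{infi-intersect} in place of Corollary~\ref{Morphisminter}, then invoke the (set-theoretic, cardinality-independent) Lemma~\ref{graph_clique}. Your parenthetical remark that the index set $(i,j)\in I\times I$ must be read as non-adjacent pairs is well taken and consistent with the paper's definition of $\mathcal{A}_P$.
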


\section*{Acknowledgement}

This work resulted from research supported by the University of Paris. I am very grateful to Daniel Bennequin for our numerous discussions.

\bibliographystyle{plainnat}
\bibliography{bintro}

\begin{thebibliography}{9}
\providecommand{\natexlab}[1]{#1}
\providecommand{\url}[1]{\texttt{#1}}
\expandafter\ifx\csname urlstyle\endcsname\relax
  \providecommand{\doi}[1]{doi: #1}\else
  \providecommand{\doi}{doi: \begingroup \urlstyle{rm}\Url}\fi

\bibitem[Bourbaki(1939)]{Bourbaki}
Nicolas Bourbaki.
\newblock \emph{Th\'eorie des ensembles}.
\newblock Springer, 1939.

\bibitem[Chan and Yeung(2011)]{Yeung}
T.~H. Chan and Raymond~W. Yeung.
\newblock Probabilistic inference using function factorization and divergence
  minimization.
\newblock In M.~Dehmer et~al., editor, \emph{Towards an Information Theory of
  Complex Networks: Statistical Methods and Applications}, chapter~3, pages
  47--74. Springer, 2011.

\bibitem[Dawid(2001)]{Dawid}
Alexander~Philip Dawid.
\newblock Separoids: A mathematical framework for conditional independence and
  irrelevance.
\newblock \emph{Annals of Mathematics and Artificial Intelligence 3}, 2001.

\bibitem[Langseth()]{Langseth}
Helge Langseth.
\newblock The {H}ammersley-{C}lifford {T}heorem and its impact on modern
  statistics.

\bibitem[Lauritzen(1996)]{Lauritzen}
Steffen~L. Lauritzen.
\newblock \emph{Graphical Models}.
\newblock Oxford Science Publications, 1996.

\bibitem[Pearl(1988)]{Pearl1988}
Judea Pearl.
\newblock \emph{Probabilistic Reasoning in Intelligent Systems: Networks of
  Plausible Inference}.
\newblock Morgan Kaufmann Publishers, 1988.

\bibitem[Sergeant-Perthuis(2019)]{GS2}
Gr\'egoire Sergeant-Perthuis.
\newblock Intersection property and interaction decomposition.
\newblock arXiv:1904.09017v1, 2019.

\bibitem[Speed(1979)]{Speed}
Terry~P. Speed.
\newblock A note on nearest-neighbour gibbs and markov probabilities.
\newblock \emph{Sankhy\=a: The Indian Journal of Statistics, Series A}, 1979.

\bibitem[Yeung(2002)]{FirstC}
Raymond~W. Yeung.
\newblock \emph{A First Course in Information Theory}.
\newblock Springer, 2002.

\end{thebibliography}

\end{document}